\crefname{hypothesis}{Hypothesis}{Hypotheses}
\title{An Alternative Approach to\\
	Inverse $\mathcal{Z}$-Transform of Rational Functions 
}
\author{MohammadJavad Vaez\thanks{\parbox[t]{0.8\textwidth}{\raggedright A master student in the School of Mathematics, Statistics, and Computer Science, College of Science, University of Tehran, Tehran, Iran (\email{mohammadjavadvaez@gmail.com}).}}
	\and Alireza Hosseini\thanks{School of Mathematics, Statistics, and Computer Science, College of Science, University of Tehran, Tehran, Iran (\email{hosseini.alireza@ut.ac.ir}).}\and Kamal Jamshidi\thanks{Faculty of Computer Engineering, University of Isfahan, Isfahan, Iran (\email{jamshidi@eng.ui.ac.ir}).}}
\begin{document}

\maketitle

\begin{abstract}
This paper introduces a novel method for calculating the inverse $\mathcal{Z}$-transform of rational functions. Unlike some existing approaches that rely on partial fraction expansion and involve dividing by $z$, the proposed method allows for the direct computation of the inverse $\mathcal{Z}$-transform without such division. Furthermore, this method expands the rational functions over real numbers instead of complex numbers. Hence, it doesn't need algebraic manipulations to obtain a real-valued answer. Furthermore, it aligns our method more closely with established techniques used in integral, Laplace, and Fourier transforms. In addition, it can lead to fewer calculations in some cases.
\end{abstract}

\begin{keywords}
inverse $\mathcal{Z}$-transform, partial fraction expansion, discrete-time signals, discrete-time systems, Cauchy residue calculus, teaching methodology
\end{keywords}

\begin{MSCcodes}
93-08, 93C55, 93C62, 94A12
\end{MSCcodes}

\section{Introduction}
The inverse $\mathcal{Z}$-transform plays a fundamental role in the field of digital signal processing and system analysis. It allows us to recover the original time-domain representation of a discrete-time signal from its corresponding frequency-domain representation in the $z$-domain. This transformation is invaluable in various areas, including digital filter design, control systems, and communication systems, as well as in the analysis and solution of difference equations (recurrence relations).

Traditionally, the computation of the inverse $\mathcal{Z}$-transform of rational functions has been tackled using established techniques, such as long division, contour integration, and partial fraction expansion \cite{moreira-2012,Juric-2023}.

One commonly adopted approach, proposed by \cite{moreira-2012}, employs a comprehensive method based on partial fraction expansion. However, unlike conventional methods for integration, Laplace transform, and Fourier transform, this approach requires an initial division of the assumed function by $z$. Interestingly, a similar division was previously proposed in an earlier paper by \cite{HassulShahian1991}, leading to discussions on the seeming differences in results. Recently, an alternative method was proposed in \cite{Juric-2023}, which eliminates the need for partial fraction expansion but still requires division by $z$. Additionally, if the assumed function has complex poles, further algebraic manipulations are required to obtain a real-valued result.

On the other hand, in a prior work by \cite{chen-2004}, the authors mentioned that dividing by $z$ increases the burden of calculation for some cases. Moreover, they attempted to align partial fraction expansion with the Laplace transform by avoiding such division and finding some new formulas for inverse $\mathcal{Z}$-transform of rational functions. Nonetheless, they did not address cases involving complex poles of degree higher than one.

In this paper, we aim to generalize the overlooked method proposed by \cite{chen-2004} to accommodate arbitrary degrees of complex poles. By doing so, we introduce a novel and versatile method for computing the inverse $\mathcal{Z}$-transform of rational functions.

In the forthcoming sections of this paper, we will briefly review previous methods, followed by a detailed presentation of our proposed method. We will highlight its underlying theorems and demonstrate its applicability through illustrative examples. Additionally, we will compare the performance of our method with existing approaches.

\section{computation of inverse $\mathcal{Z}$-transform}
In this paper, we primarily focus on the unilateral $\mathcal{Z}$-transform, which is particularly important in digital signal processing, as it deals with signals in the positive time domain. As mentioned in prior research \cite{moreira-2012}, there are at least three well-known methods for computing the inverse $\mathcal{Z}$-transform of rational functions. Recently, a new method has been proposed by\cite{Juric-2023}. In the sequel, we introduce the main methods proposed previously in this literature.
\subsection{Contour Integration}
The more basic method, applicable to any function, involves contour integration. If we denote the $\mathcal{Z}$-transform of the discrete-time signal $x[n]$ as $X(z)$, the inverse $\mathcal{Z}$-transform can be expressed as
\begin{equation} \label{contour_int}
	x[n]=\frac{1}{2\pi i}\oint_{C}\, X(z)z^{n-1}\, dz.
\end{equation}
where $C$ is a counterclockwise contour that encircles the origin and lies entirely in the region of convergence (ROC). For unilateral $\mathcal{Z}$-transform, the interior of $C$ must contain all poles of $X(z)$. Assuming that $X(z)$ has $K$ distinct poles $z_1, z_2, \ldots, z_K$, we can evaluate \eqref{contour_int} by applying Cauchy's residue theorem
\begin{equation} \label{sum_of_res}
	x[n]=\sum_{k=1}^{K}\displaystyle\underset{z=z_k}{\mathrm{Res}} \left(X(z)z^{n-1}\right).
\end{equation}
If $z_i$ is a pole of multiplicity $m$, then
\begin{equation} \label{calculating_residues}
	\displaystyle\underset{z=z_k}{\mathrm{Res}} \left(X(z)z^{n-1}\right)=\frac{1}{(m-1)!}\lim_{z\to z_i}\frac{d^{m-1}}{{dz}^{m-1}}\left[(z-z_i)^mX(z)z^{n-1}\right].
\end{equation}
\newline \par
The other two methods mentioned in \cite{moreira-2012} specifically trigger the rational functions \footnote{Of course, partial fraction expansion can also be applied to non-rational functions involving infinite series and the Mittag-Leffler theorem \cite{chaudhary-2011}. Here, however, we focus solely on rational functions.}. A rational function is nothing but the quotient of two polynomials, say
\begin{equation}
	X(z)=\frac{b_0z^p+b_1z^{p-1}+b_2z^{p-2}+\ldots+b_p}{z^q+a_1z^{q-1}+a_2z^{q-2}+\ldots+a_q}.
\end{equation}
\subsection{long division}
The next approach is the long division method. This \-method generally does not yield a closed-form expression for $x[n]$. Thus, for the sake of brevity, interested readers can refer to Example 2.12 in \cite{phillips2014}. Nonetheless, it is important to mention that, according to \cite{moreira-2012}, using this method allows us to easily deduce the following relationships:

\begin{equation} \label{first terms}
	\text{If } q>p, \text{ then } x[0]=x[1]=\ldots=x[q-p-1]=0;
\end{equation}
and
\begin{equation} \label{long_div_2}
	\text{If } q\geq p, \text{ then } x[q-p]=b_0.
\end{equation}

\subsection{Partial Fraction Expansion}
The primary method of focus in both \cite{moreira-2012} and the present study is partial fraction expansion, also known as partial fraction decomposition. However, there are two notable distinctions between the approach employed in this study and the one proposed in \cite{moreira-2012}:
\begin{itemize}
	\item The approach of this study decomposes polynomials over the field of real numbers while their approach decompose them over complex numbers.
	
	\item In this study, the expansion is applied to $X(z)$, while the approach in \cite{moreira-2012} expands $\frac{X(z)}{z}$.
\end{itemize}
In fact, these differences align the method employed in this study with the conventional techniques commonly used for integral, Laplace transform, and Fourier transform.

Table \ref{tab:inverse_z_transforms}, originally presented in \cite{moreira-2012}, is utilized as a reference for comparing the inverse $\mathcal{Z}$-transform of a rational function with our proposed method.

\begin{table}
	\caption{Inverse $\mathcal{Z}$-Transforms for the General Terms in Partial-Fraction Expansion, Taken from \cite{moreira-2012}}
	\label{tab:inverse_z_transforms}
	\begin{tabular}{@{}lll@{}}
		\toprule
		\textbf{Type of poles}                  & \textbf{Term}                                                                                                  & \textbf{Inverse $\mathcal{Z}$-transform}                                 \\ \midrule
		Single/multiple poles at $z=0$ & $\frac{A}{z^{n_0}}$                                                                                   & $A\delta(n-n_0)$                                                \\ \midrule
		Single real pole               & $\frac{Az}{z-r}$                                                                                      & $Ar^n,\text{ } n\geq0$                                                 \\ \midrule
		Multiple real pole             & $\frac{Az}{(z-r)^q}$                                                                                  & $\frac{Ar^{n-q+1}}{(q-1)!}\prod_{i=0}^{q-2}(n-i),\text{ } n\geq0$      \\ \midrule
		Single complex poles         & $\begin{aligned}
			&\begin{tabular}{@{}l@{}}
				$\frac{Ae^{i\phi}z}{z-re^{i\theta}}+\frac{Ae^{-i\phi}z}{z-re^{-i\theta}},$\\
				$A,r\in\mathbb{R}_+$
			\end{tabular}
		\end{aligned}$ & $\begin{aligned}
			&\begin{tabular}{@{}l@{}}
				$2Ar^n\cos(n\theta+\phi),\text{ } n\geq0$
			\end{tabular}
		\end{aligned}$ \\ \midrule
		Multiple complex poles         & $\begin{aligned}
			&\begin{tabular}{@{}l@{}}
				$\frac{Ae^{i\phi}z}{(z-re^{i\theta})^q}+\frac{Ae^{-i\phi}z}{(z-re^{-i\theta})^q},$\\
				$A,r\in\mathbb{R}_+$
			\end{tabular}
		\end{aligned}$ & $\begin{aligned}
			&\begin{tabular}{@{}l@{}}
				$\frac{2Ar^{n-q+1}}{(q-1)!}\cos\left((n-q+1)\theta+\phi\right)$ \\
				$\times \prod_{i=0}^{q-2}(n-i),\text{ } n\geq0$
			\end{tabular}
		\end{aligned}$ \\ \bottomrule
	\end{tabular}
\end{table}

\subsection{The Method Proposed by \cite{Juric-2023}}
The primary method proposed by \cite{Juric-2023} for the unilateral $\mathcal{Z}$-transform is outlined as follows:

Let $\frac{X(z)}{z}=\frac{N(z)}{D(z)}$ and $D(z)$ has distinct roots $z_1,z_2,\ldots,z_K$ with multiplicity $m_1,m_2,\ldots,m_K$ respectively. Then, the expression for $x[n]$ is given by
\begin{equation} \label{Juric1}
	x[n]=\sum_{k=1}^{K}\sum_{j=0}^{m_k-1}c_{k,m_k-1-j}\binom{n}{j}z_k^{n-j}.
\end{equation}
If $z_k=0$, $\binom{n}{j}z_k^{n-j}$ in Equation \eqref{Juric1} should be replaced with $\delta[n-j]$.The coefficients $c_{k,j}$, where $k=1,2,\ldots,K$ and $j=0,1,\ldots,m_k-1$, are calculated using the following formula:
\begin{equation} \label{Juric2}
	c_{k,j} = \frac{1}{j!D_k(z_k)} \left(N^{(j)}(z_k) - \sum_{l=0}^{j-1}c_{k,l}(j)_lD_k^{(j-l)}(z_k)\right).
\end{equation}
Here, $(j)_l$ represents the falling factorial:
\begin{equation} \label{falling_factorial}
	\left(j\right)_l:={\overbrace{j(j-1)\dots(j-l+1)}^{\text{$l$ terms}}}.
\end{equation}
Moreover, $N^{(j)}(z), D_k^{(j)}(z)$ denote the $j$-th derivatives of $N(z), D_k(z)$ respectively, and $D_k(z)=\begin{cases}
			(z-z_k)^{m_k}D(z), & \text{if $z\neq z_k$}\\
            \lim_{z\to z_k}(z-z_k)^{m_k}D(z), & \text{if $z=z_k$}
		 \end{cases}.$ The equation  \eqref{Juric2} is recursive and complicated. For this reason, the first few terms of that has been calculated in \cite{Juric-2023}:
\begin{equation}\label{Juric3}
	c_{k,0}=\frac{N(z_k)}{D_k(z_k)},
\end{equation}
\begin{equation}\label{Juric4}
	c_{k,1}=\frac{N'(z_k)-c_{k,0}D_k'(z_k)}{D_k(z_k)},
\end{equation}
\begin{equation}\label{Juric5}
	c_{k,2}=\frac{N''(z_k)-c_{k,0}D_k''(z_k)-2c_{k,1}D_k'(z_k)}{2D_k(z_k)}.
\end{equation}
\newline \par
In the subsequent part, we will explain the proposed method.

\section{The Proposed Method}
In this approach, we utilize  the conventional partial fraction expansion of a rational function over the field of real numbers, as described in \cite{larson-2010}. Let $X(z)=\frac{N(z)}{D(z)}$. If $deg(N)\geq deg(D)$, divide the numerator by the denominator to obtain $\frac{N(z)}{D(z)}=\text{polynomial}+\frac{N_1(z)}{D(z)}$ (where $N_1(z)$ represents the remainder from the division of $N(z)$ by $D(z)$). Next, factorize the denominator into factors of the following forms: $(z-r)^u$ and $(z^2-2az+(a^2+b^2))^k$, where $r,a,b\in \mathbb{R}$ (Note that the second form of factors is irreducible over the real numbers).

For each factor of the form $(z-r)^u$, the partial fraction expansion should include a sum of $u$ fractions as follows:
\[\frac{A_1}{z-r}+\frac{A_2}{(z-r)^2}+\ldots+\frac{A_u}{(z-r)^u}.\]
For each factor of the form $(z^2-2az+(a^2+b^2))^k$, the partial fraction expansion should include a sum of $k$ fractions:
\[\frac{B_1z+C_1}{z^2-2az+(a^2+b^2)}+\frac{B_2z+C_2}{(z^2-2az+(a^2+b^2))^2}+\ldots+\frac{B_kz+C_k}{(z^2-2az+(a^2+b^2))^k}.\]
That is, the partial fraction expansion of $X(z)$ is the following:
\begin{equation}
	X(z)=\frac{N(z)}{D(z)}=P(z)+\sum_{h=1}^{v}\sum_{j=1}^{u_h}\frac{A_{hj}}{(z-r_h)^j}+\sum_{h=1}^{w}\sum_{j=1}^{k_h}\frac{B_{hj}z+C_{hj}}{(z^2-2a_hz+(a_h^2+b_h^2))^j}.
\end{equation}
There are several approaches to determine the coefficients $A_{hj},B_{hj},C_{hj}$. The most direct method involves multiplying both sides of the equation by the common denominator, $D(z)$. This yields a polynomial equation where the left-hand side simplifies to $N(z)$, while the right-hand side consists of coefficients expressed as linear combinations of the constants $A_{hj},B_{hj},C_{hj}$. By equating the coefficients of corresponding terms, we obtain a system of linear equations. This system always possesses a unique solution. Standard methods of linear algebra can be employed to find this solution. Alternatively, limits can be utilized, as demonstrated in \cite{bluman-1984}.

Therefore, by leveraging the linearity property of the inverse $\mathcal{Z}$-transform, it is sufficient to calculate the inverse $\mathcal{Z}$-transform of each term. In the following sections, we will determine these inverse transforms.
\subsection{Powers of $z$}
For the polynomial terms and terms containing poles at the origin, it is sufficient to know the inverse $\mathcal{Z}$-transform of the powers of $z$, which can be easily found.
\begin{equation}
	\mathcal{Z}^{-1}(z^k)=\delta[n+k].
\end{equation}
where $k$ can be an arbitrary integer, and $\delta[\cdot]$ represents the discrete-time unit impulse function. This equation was also addressed in \cite{phillips2014,moreira-2012}.
\subsection{Terms Containing Real Non-zero Poles}
\begin{theorem}
\begin{align}
    \label{real_poles_1}
		\mathcal{Z}^{-1}\left(\frac{1}{\left(z-a\right)^k}\right) &=\binom{n-1}{k-1}a^{n-k} \\
        &=\binom{n-1}{n-k}a^{n-k}.  \label{real_poles_2}
\end{align}
\end{theorem}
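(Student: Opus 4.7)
The plan is to compute the Z-transform series directly by expanding $1/(z-a)^k$ as a Laurent series in $z^{-1}$ via the generalized binomial theorem, and then read off its coefficients. An alternative route would be induction on $k$ based on the derivative-in-$z$ property $\mathcal{Z}\{n\,x[n]\}=-z X'(z)$ together with the base case $k=1$, but this introduces extra machinery for what is essentially an application of Newton's series, so I would favour the direct approach.

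First I would factor out $z^{-k}$ to write
\[
\frac{1}{(z-a)^k} \;=\; z^{-k}\bigl(1-az^{-1}\bigr)^{-k},
\]
valid in the region $|z|>|a|$, which matches the ROC of a right-sided sequence. Then I would apply the generalized binomial expansion
\[
(1-y)^{-k} \;=\; \sum_{j=0}^{\infty}\binom{j+k-1}{k-1}y^{j}
\]
with $y=az^{-1}$, giving
\[
\frac{1}{(z-a)^k} \;=\; \sum_{j=0}^{\infty}\binom{j+k-1}{k-1}\,a^{j}\,z^{-(j+k)}.
\]
Reindexing via $n=j+k$ would then yield
\[
\frac{1}{(z-a)^k} \;=\; \sum_{n=k}^{\infty}\binom{n-1}{k-1}\,a^{n-k}\,z^{-n},
\]
which matches the defining series $X(z)=\sum_{n=0}^{\infty}x[n]\,z^{-n}$ of the unilateral $\mathcal{Z}$-transform. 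By uniqueness of the Laurent expansion on its annulus of convergence, this identification forces $x[n]=\binom{n-1}{k-1}a^{n-k}$, establishing \eqref{real_poles_1}. Under the standard convention $\binom{n-1}{k-1}=0$ for $0\le n<k$, the boundary terms are handled automatically and the formula holds for every $n\ge 0$.

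The equivalent form \eqref{real_poles_2} then falls out immediately from the symmetry $\binom{m}{j}=\binom{m}{m-j}$ with $m=n-1$ and $j=k-1$, so that $m-j=n-k$.

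The main obstacle is essentially bookkeeping rather than anything deep: one must be careful that the series manipulation takes place strictly inside the ROC (so that the Laurent expansion is legitimate and unique), and that the boundary indices $0\le n<k$ are interpreted via the vanishing convention for the binomial coefficient. Once these two conventions are fixed, the proof reduces to the single application of the generalized binomial theorem outlined above.
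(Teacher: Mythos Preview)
Your proof is correct, but it takes a genuinely different route from the paper's. The paper does not expand the Laurent series directly; instead it invokes two prior results from \cite{chen-2004}: for $k=1$ the identity $\mathcal{Z}^{-1}\!\left(\tfrac{1}{z-a}\right)=a^{n-1}u[n-1]$, and for $k\ge 2$ a formula giving $\mathcal{Z}^{-1}\!\left(\tfrac{(k-1)!}{(z-a)^k}\right)=(n-1)(n-2)\cdots(n-k+1)\,a^{n-k}u[n-1]$. It then observes that the falling-factorial prefactor already vanishes for $1\le n\le k-1$, so $u[n-1]$ may be replaced by $u[n-k]$ and absorbed into the binomial coefficient via Remark~\ref{remark_1}. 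Your argument, by contrast, is entirely self-contained: a single application of the negative-binomial series to $(1-az^{-1})^{-k}$ followed by reindexing. This buys you independence from the cited theorems and makes the ROC and the vanishing for $n<k$ explicit from the series itself. The paper's version is shorter on the page because the work is outsourced to \cite{chen-2004}, but it is less transparent about why the formula holds. One small caveat: your appeal to the ``standard convention'' $\binom{n-1}{k-1}=0$ for $0\le n<k$ should, at $n=0$, be read in the sense of the paper's Remark~\ref{remark_1} (zero whenever the lower index exceeds the upper), since the purely polynomial definition of $\binom{-1}{k-1}$ would give $(-1)^{k-1}$ rather than $0$.
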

\begin{remark}  \label{remark_1}
    Note that $\binom{\nu}{\kappa}=\frac{\nu(\nu-1)\ldots(\nu-\kappa+1)}{\kappa!}$ and is zero when $\kappa>\nu$.
\end{remark}
Additionally, note that as mentioned in \cite{Juric-2023}, $\lim_{\epsilon\to 0} \binom{n-1}{k-1}\epsilon^{n-k}=\delta[n-k]$. This implies that the terms containing poles at the origin are the limiting case of the real non-zero poles.
\begin{proof}
	Since $\binom{n-1}{k-1}=\binom{n-1}{n-k}$, it suffices to prove Equation (\ref{real_poles_1}).
	According to \cite[Theorem 1]{chen-2004}, we know that
	\begin{align} \label{removal_of_u}
	    \mathcal{Z}^{-1}\left(\frac{1}{z-a}\right) &=a^{n-1}u\left[n-1\right]  \nonumber \\
       &=\binom{n-1}{1-1}a^{n-1}
	\end{align}
 
	Hence, the statement holds for $k=1$. Moreover, based on \cite[Theorem 2]{chen-2004}, for $k\geq2$
	\begin{align}
		\mathcal{Z}^{-1}\left(\frac{1}{\left(z-a\right)^k}\right) &= \frac{1}{(k-1)!}\mathcal{Z}^{-1}\left(\frac{(k-1)!}{\left(z-a\right)^k}\right) \nonumber \\
		&= \frac{(n-1)(n-2)\ldots(n-(k-1))}{(k-1)!}a^{n-k}u\left[n-1\right].
	\end{align}
	Note that for each integer $n$, we have $(n-1)(n-2)\ldots(n-(k-1))u\left[n-1\right]=(n-1)(n-2)\ldots(n-(k-1))u\left[n-k\right]$
	Therefore,
	\begin{align}
		\mathcal{Z}^{-1}\left(\frac{1}{\left(z-a\right)^k}\right) &= \frac{(n-1)(n-2)\ldots(n-(k-1))}{(k-1)!}a^{n-k}u\left[n-k\right] \nonumber\\
        &= \binom{n-1}{k-1}a^{n-k}u\left[n-k\right] \nonumber\\
        &=\binom{n-1}{k-1}a^{n-k}.
	\end{align}
\end{proof}
\subsection{Terms Containing Complex Poles}
\begin{theorem} \label{f0_theorem} Let $a\pm ib=re^{\pm i\theta}$. Suppose
\begin{equation}
		f_0[n]=\mathcal{Z}^{-1}\left(\frac{1}{\left(z^2-2az+\left(a^2+b^2\right)\right)^k}\right).
	\end{equation}
    Then
    \begin{equation} \label{main_theorem}
        f_0[n]=\frac{2~(-1)^{k-1}r^{n-2k}}{\left(2\sin{\theta}\right)^{2k-1}}\sum_{j=0}^{k-1}{\left(-1\right)^j\binom{n-1}{j}\binom{n-(k+1+j)}{k-1-j}\sin{\left(\left(n-2j-1\right)\theta\right)}}.
    \end{equation}
\end{theorem}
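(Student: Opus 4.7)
The plan is to perform the standard partial-fraction expansion of the rational function over the complex numbers, invert each elementary term via the formula $\mathcal{Z}^{-1}\bigl(1/(z-\alpha)^j\bigr)=\binom{n-1}{j-1}\alpha^{n-j}$ from the preceding theorem (which extends to complex $\alpha$ by analytic continuation, the ROC being $|z|>|\alpha|$), and then combine the two complex-conjugate sums into a purely real trigonometric expression matching the claimed form.

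First I would factor the denominator as $(z-re^{i\theta})^k(z-re^{-i\theta})^k$ and write, using the invariance of the original fraction under $\theta\mapsto-\theta$,
\[
\frac{1}{(z^{2}-2az+(a^{2}+b^{2}))^{k}}=\sum_{j=1}^{k}\frac{A_{j}}{(z-re^{i\theta})^{j}}+\sum_{j=1}^{k}\frac{\overline{A_{j}}}{(z-re^{-i\theta})^{j}}.
\]
The coefficients $A_{j}$ come from the standard derivative-at-the-pole rule applied to $1/(z-re^{-i\theta})^{k}$, and since $z-re^{-i\theta}=2ir\sin\theta$ at $z=re^{i\theta}$ one obtains $A_{j}=(-1)^{k-j}\binom{2k-j-1}{k-j}/(2ir\sin\theta)^{2k-j}$. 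Applying the preceding theorem term by term and pairing with conjugates gives
\[
f_{0}[n]=2\operatorname{Re}\sum_{j=1}^{k}A_{j}\binom{n-1}{j-1}\bigl(re^{i\theta}\bigr)^{n-j},
\]
which, after collecting the powers of $r$, $i$, and $\sin\theta$ and pulling out the common factor $r^{n-2k}/(2\sin\theta)^{2k-1}$, becomes a finite real sum whose only oscillatory content is $\operatorname{Re}\bigl(i^{j}e^{i(n-j)\theta}\bigr)$.

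The main obstacle is to massage this into the stated all-sine form. The key trick is to rewrite $i^{j}(2\sin\theta)^{j-1}=i(e^{i\theta}-e^{-i\theta})^{j-1}$, expand by the binomial theorem, and use $\operatorname{Re}(ie^{i\alpha})=-\sin\alpha$; every surviving oscillation is then of the form $\sin((n-2\ell-1)\theta)$ for some $\ell\in\{0,\dots,j-1\}$. Swapping the order of summation and applying the elementary trinomial identity $\binom{n-1}{j-1}\binom{j-1}{\ell}=\binom{n-1}{\ell}\binom{n-1-\ell}{j-1-\ell}$, the coefficient of $\sin((n-2\ell-1)\theta)$ reduces, up to an overall sign, to
\[
\binom{n-1}{\ell}\sum_{s=0}^{k-1-\ell}(-1)^{s}\binom{2k-\ell-s-2}{k-\ell-s-1}\binom{n-1-\ell}{s}.
\]
Evaluating this inner sum in closed form is the crux of the argument. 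I would do so via the negative-upper-index conversion $\binom{k+t-1}{t}=(-1)^{t}\binom{-k}{t}$ followed by Vandermonde's convolution $\sum_{t}\binom{-k}{t}\binom{N}{K-t}=\binom{N-k}{K}$, which collapses the sum to $(-1)^{k-1-\ell}\binom{n-k-1-\ell}{k-1-\ell}$; reinstating the prefactor $(-1)^{k-1}$ from the statement and the $(-1)^{\ell}$ from the binomial expansion then matches the target exactly. Everything else is routine bookkeeping of exponents and binomials.
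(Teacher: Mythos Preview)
Your proposal is correct, and the overall architecture---factor into conjugate poles, invert each elementary piece, expand the resulting exponentials via the binomial theorem, swap the order of summation, and then close the inner sum---parallels the paper's proof. The paper works directly with residues of $X(z)z^{n-1}$ via the Leibniz rule rather than first writing out the partial-fraction coefficients $A_j$, but after the Leibniz step the two computations land on essentially the same double sum indexed by $(j,\ell)$ (their $(t,j)$).

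The genuine difference is in how the inner combinatorial sum is evaluated. You convert $\binom{2k-\ell-s-2}{k-\ell-s-1}$ to $(-1)^{k-1-\ell-s}\binom{-k}{k-1-\ell-s}$ and collapse the sum in one stroke by Vandermonde's convolution, obtaining $(-1)^{k-1-\ell}\binom{n-k-1-\ell}{k-1-\ell}$ directly. The paper instead observes that the inner sum is a polynomial of degree $k-1$ in $n$, exhibits $1,\dots,j$ as roots trivially and $k+1+j,\dots,2k-1$ as roots by reducing to the inclusion--exclusion identity $\sum_{w=0}^{\sigma}(-1)^w\binom{\sigma}{w}(\sigma-w)^p=0$ for $p<\sigma$ (surjection count), and finally matches the leading coefficient in a separate corollary. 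Your Vandermonde route is shorter and avoids the auxiliary root-finding lemma; the paper's route is more elementary in the sense that it never invokes a named hypergeometric identity, but pays for this with a longer argument split across a lemma and a corollary.
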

Since the proof of this theorem is too long and technical, we have provided it in Appendix \ref{app:appendixA}.

It is of more mathematical rigor to denote $f_0[n]$ as $f_0[n](a,b,k)$; however, in the sequel we use the simpler notation $f_0[n]$ to avoid overcomplication and to enhance readability, especially when the parameters $a$, $b$, and $k$ are understood from the context.

\begin{corollary} \label{f1_theorem} Let $a\pm ib=re^{\pm i\theta}$. Suppose
	\begin{equation}
		f_1[n]=\mathcal{Z}^{-1}\left(\frac{z}{\left(z^2-2az+\left(a^2+b^2\right)\right)^k}\right).
	\end{equation}
    Then
    \begin{equation} \label{corollary}
    f_1[n]=f_0[n+1]
    \end{equation}
    where $f_0$ is defined in \eqref{main_theorem}.
\end{corollary}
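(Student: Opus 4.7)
The plan is to apply the time-advance property of the unilateral $\mathcal{Z}$-transform, namely that $\mathcal{Z}\{x[n+1]\}=zX(z)-zx[0]$ whenever $\mathcal{Z}\{x[n]\}=X(z)$. Taking $x[n]=f_0[n]$ and $X(z)=F_0(z):=\frac{1}{(z^2-2az+(a^2+b^2))^k}$, this gives
\[\mathcal{Z}\{f_0[n+1]\}=zF_0(z)-zf_0[0].\]
On the other hand, by the very definition of $f_1$ in the statement of the corollary, $\mathcal{Z}\{f_1[n]\}=zF_0(z)$. So once the single fact $f_0[0]=0$ is in hand, the uniqueness of the inverse $\mathcal{Z}$-transform forces $f_1[n]=f_0[n+1]$.

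The vanishing $f_0[0]=0$ is the only non-trivial ingredient, and it too is already available in the paper: Equation \eqref{first terms} asserts that when the denominator degree $q$ strictly exceeds the numerator degree $p$, the first $q-p$ samples of the inverse transform vanish. Here $p=0$ and $q=2k$, so in fact $f_0[0]=f_0[1]=\cdots=f_0[2k-1]=0$, which is much stronger than what is needed.

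There is no genuine obstacle in this argument; the corollary is essentially a direct consequence of two facts already recorded earlier in the paper. An alternative route would be to substitute $n\to n+1$ directly into the closed form \eqref{main_theorem} and match it against an independently derived closed form for $f_1[n]$ produced by mimicking the proof in Appendix \ref{app:appendixA}. That would be mechanically possible but would duplicate most of the technical labor of the appendix and obscure the simple structural reason for the identity, so the shift-property route is clearly preferable. If one wanted extra rigor, the $f_0[0]=0$ claim could additionally be cross-checked by evaluating \eqref{main_theorem} at $n=0$ and noting that each term of the sum carries a factor $\binom{-(k+1+j)-1+n}{k-1-j}\big|_{n=0}$ whose combinatorial structure forces cancellation; but the long-division argument makes this unnecessary.
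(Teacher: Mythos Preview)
Your proposal is correct and follows essentially the same approach as the paper: apply the time-advance identity $\mathcal{Z}\{x[n+1]\}=zX(z)-zx[0]$ with $x=f_0$, invoke \eqref{first terms} to get $f_0[0]=0$, and conclude $f_1[n]=f_0[n+1]$. The paper's proof is exactly this argument, stated slightly more tersely.
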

\begin{proof}
	We know that for each digital signal $x[n]$, $\mathcal{Z}(x[n+1])=zX(z)-zx[0]$. Furthermore, if
	\begin{equation}
		f_0[n]=\mathcal{Z}^{-1}\left(\frac{1}{\left(z^2-2az+\left(a^2+b^2\right)\right)^k}\right),
	\end{equation}
	then based on {\eqref{first terms}}, for each $k\geq1, f_0[0]=0$. As a result,
	\begin{equation}
		\mathcal{Z}(f_0[n+1])=zF_0(z)=\frac{z}{\left(z^2-2az+\left(a^2+b^2\right)\right)^k}.
	\end{equation}
	So, if we replace $n$ with $n+1$ in \eqref{main_theorem}, we will get the desired result.
\end{proof}
Given formulas \cref{main_theorem,corollary}, we can find the inverse $\mathcal{Z}$-transform of our building blocks of partial fraction expansion.
\begin{corollary}\label{general_cor} Let $g[n]=\mathcal{Z}^{-1}\left(\frac{A_1z+A_0}{\left(z^2-2az+\left(a^2+b^2\right)\right)^k}\right)$. Then
	\begin{align} \label{building_blocks}
	    g[n] &= A_1f_1[n] + A_0f_0[n] \nonumber \\
            &= A_1f_0[n+1] + A_0f_0[n]
	\end{align}
    where $f_0$ and $f_1$ are defined in \eqref{main_theorem} and \eqref{corollary} respectively.
    
    This statement includes both \cref{f0_theorem,f1_theorem}.
	
\end{corollary}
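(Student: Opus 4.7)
The plan is to reduce this statement to a single application of the linearity of $\mathcal{Z}^{-1}$, since the heavy lifting has already been done in \cref{f0_theorem,f1_theorem}. The numerator $A_1 z + A_0$ is an affine combination, so the rational expression splits cleanly into two building blocks whose inverse transforms are already known.

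Concretely, I would first write
\begin{equation*}
\frac{A_1 z + A_0}{\left(z^2-2az+(a^2+b^2)\right)^k}
= A_1\cdot\frac{z}{\left(z^2-2az+(a^2+b^2)\right)^k}
+ A_0\cdot\frac{1}{\left(z^2-2az+(a^2+b^2)\right)^k},
\end{equation*}
and then invoke the linearity of $\mathcal{Z}^{-1}$ to distribute the inverse transform over the sum and over the scalar constants $A_0, A_1 \in \mathbb{R}$.

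Next, I would apply \cref{f1_theorem} to the first summand to obtain $A_1 f_1[n]$, and then use the identity $f_1[n] = f_0[n+1]$ established there to rewrite it as $A_1 f_0[n+1]$. For the second summand, \cref{f0_theorem} immediately yields $A_0 f_0[n]$. Adding the two results produces precisely \eqref{building_blocks}.

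There is no substantive obstacle: the argument is one line of linearity plus two citations. The parenthetical claim that this corollary ``includes'' the previous two results is verified by specialization — setting $(A_1, A_0) = (0,1)$ recovers \cref{f0_theorem}, while $(A_1, A_0) = (1,0)$ recovers \cref{f1_theorem}. The only thing worth being explicit about is that linearity over $\mathbb{R}$ (rather than only $\mathbb{C}$) suffices here, which matches the real-number decomposition philosophy of the paper and ensures the output formula stays real-valued whenever the coefficients are.
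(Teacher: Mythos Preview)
Your proposal is correct and matches the paper's intent: the paper states this corollary without proof, treating it as an immediate consequence of the linearity of $\mathcal{Z}^{-1}$ together with \cref{f0_theorem,f1_theorem}, which is exactly the argument you give.
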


\begin{remark}\label{remark_2}
    Note that based on \cref{first terms,main_theorem}, the terms $f_0[n]~~\forall~n<2k$ are zero. Thus, we can always multiply $f_0[n]$ by a term $u[n-2k]$. Similarly, based on \cref{first terms,f1_theorem}, we can multiply $f_1[n]$ by a term $u[n-2k+1]$. These terms will become necessary only when simplifying the summations (otherwise, based on remark \eqref{remark_1}, the binomial coefficients, themselves, satisfy this condition).
\end{remark}

\theoremstyle{definition}
\newtheorem{example}{Example}[section]
\section{Examples}
Now, we give some examples to illustrate how to use  \eqref{building_blocks}.
\begin{example}
	In this example, we will compute the inverse $\mathcal{Z}$-transform of $\frac{1}{z^2+1}$. Hence, we can use the equation \eqref{main_theorem} with $k=1$, and $a\pm ib=e^{\pm i\frac{\pi}{2}}$; i.e. $r=1, \theta=\frac{\pi}{2}$. Therefore, the summation used in \eqref{main_theorem} will have just one summand for $j=0$. Then, both the binomial coefficients will be equal to 1 (except for $f_0[0], f_0[1]$, based on Remark \eqref{remark_2}). In summary, we have
	\begin{equation} \label{proposed1}
		\mathcal{Z}^{-1}\left(\frac{1}{z^2+1}\right)=\frac{u[n-2]}{\sin\frac{\pi}{2}}\sin\left((n-1)\frac{\pi}{2}\right)=-u[n-2]\cos\left(\frac{n\pi}{2}\right).
	\end{equation}
	
	Now, let's find the answer using the method described in \cite{moreira-2012}. According to that method, first, we must decompose $Y(z)=\frac{X(z)}{z}$ over the field of complex numbers; that is
	\begin{equation} \label{moreira11}
		\frac{1}{z(z^2+1)}=\frac{A}{z}+\frac{B}{z-i}+\frac{B^*}{z+i}\text{ where } B=B_1e^{i\phi},
	\end{equation}
	and the coefficients are obtained via these limits:
	\[A=\lim_{z\to 0}zY(z)=\lim_{z\to 0}\frac{1}{z^2+1}=1\]
	and
	\[B=\lim_{z\to i}(z-i)Y(z)=\lim_{z\to i}\frac{1}{z(z+i)}=\frac{-1}{2}.\]
	By multiplying both sides of \eqref{moreira11} by $z$, we get
	\[\frac{1}{z^2+1}=1+\frac{-z/2}{z-i}+\frac{-z/2}{z+i}.\]
	So, using the Table \ref{tab:inverse_z_transforms}, we obtain
	\begin{equation} \label{moreira12}
		x[n]=\delta[n]+2\left(\frac{-1}{2}\right)\cos\left(\frac{\pi}{2}n\right)=\delta[n]-\cos\left(\frac{n\pi}{2}\right).
	\end{equation}
	It's easy to see that the results of \cref{proposed1,moreira12} are identical.
	
	Also, let's solve it by the method proposed by \cite{Juric-2023} mentioned in \cref{Juric1,Juric2}.
	Since $\frac{X(z)}{z}=\frac{1}{z(z^2+1)}$, we have $z_1=0, z_2=i, z_3=-i, m_1=m_2=m_3=1, P(z)=1, Q(z)=z(z^2+1)$. Therefore,
	\begin{equation}
		x[n]=c_{1,0}\delta[n]+\sum_{k=2}^{3}c_{k,0}z_k^n \text{ where } c_{k,0}=\frac{P(z_k)}{Q_k(z_k)}.
	\end{equation}
	This gives us
	\begin{align}
		x[n] &= c_{1,0}\delta[n] + c_{2,0}z_2^n + c_{3,0}z_3^n \nonumber \\
		&= \left[\frac{1}{z^2+1}\right]_{z=0}\delta[n] + \left[\frac{1}{z(z+i)}\right]_{z=i}i^n + \left[\frac{1}{z(z-i)}\right]_{z=i}(-i)^n \nonumber \\
		&= \delta[n]-\frac{i^n}{2}-\frac{(-i)^n}{2}=\delta[n]-\frac{(i^n+(-i)^n)}{2}=\delta[n]-\mathfrak{Re}(i^n) \nonumber \\
		&= \delta[n]-\cos(\frac{n\pi}{2}).
	\end{align}
\end{example}
\begin{example}
	Let's consider a more complex example. Our goal is to find the inverse $\mathcal{Z}$-transform of the expression $X(z)=\frac{Az+B}{(z^2-2az+(a^2+b^2))^3}$. To solve it using our proposed method, we can refer to Corollary \eqref{general_cor}. Using these equations, we obtain:
	\[x[n]=Af_0[n]+Bf_0[n+1],\]
	where
    \begin{equation}
        f_0[n]=\frac{2~(-1)^{k-1}r^{n-2k}}{\left(2\sin{\theta}\right)^{2k-1}}\sum_{j=0}^{k-1}{\left(-1\right)^j\binom{n-1}{j}\binom{n-(k+1+j)}{k-1-j}\sin{\left(\left(n-2j-1\right)\theta\right)}}.
    \end{equation}
    in which $r=\sqrt{a^2+b^2}$, and $\theta=Arg(a+ib)$.
	As a result,
	\[x[n]=A\delta[n-5]+\frac{r^{n-6}u\left[n-6\right]}{16\left(\sin{\theta}\right)^5}\left(S_0+S_1+S_2\right),\]
	where $S_0$, $S_1$, and $S_2$ are defined as follows:
	\[S_0=Ar\binom{n-3}{2}\sin((n-1)\theta)+B\binom{n-4}{2}\sin(n\theta),\]
	\[S_1=Ar\binom{n}{1}\binom{n-4}{1}\sin((n-3)\theta)+B\binom{n-1}{1}\binom{n-5}{1}\sin((n-2)\theta),\]
    and
	\[S_2=Ar\binom{n}{2}\sin((n-5)\theta)+B\binom{n-1}{2}\sin((n-5)\theta).\]
	\newline
	If we want to obtain the result using the method proposed by \cite{moreira-2012}, we need to perform the partial fraction decomposition of $Y(z)=\frac{X(x)}{z}$. The decomposition is as follows:
	\begin{align*}
		\frac{Az+B}{z(z^2-2az+(a^2+b^2))^3} = \frac{A_{1,1}}{z} &+ \frac{A_{2,1}}{z-(a+ib)} + \frac{A_{2,1}^*}{z-(a-ib)} \\
		&+ \frac{A_{2,2}}{(z-(a+ib))^2} + \frac{A_{2,2}^*}{(z-(a-ib))^2} \\
		&+ \frac{A_{2,3}}{(z-(a+ib))^3} + \frac{A_{2,3}^*}{(z-(a-ib))^3},
	\end{align*}
	where
	\[A_{1,1}=\lim_{z\to 0}zY(z)=\lim_{z\to 0}\frac{Az+B}{(z^2-2az+(a^2+b^2))^3}=\frac{B}{(a^2+b^2)^3}=\frac{B}{r^6}\text{ ,}\]
	\[A_{2,p}=\frac{1}{(3-p)!}\lim_{z\to a+ib}\frac{d^{3-p}}{dz^{3-p}}\left((z-(a+ib))^3Y(z)\right) \text{ for } p=1,2,3.\]
	It's easy to see that the remaining calculations using this method would be lengthy and cumbersome.
	
	Furthermore, if we want to obtain the result using the method proposed by \cite{Juric-2023} (refer to \cref{Juric1,Juric2}), the solution can be expressed as follows:
	First, let $z_1=0, z_2=a+ib, z_3=a-ib$. Then
	\[x[n]=c_{1,0}\delta[n]+\sum_{k=2}^{3}\sum_{j=0}^{2}c_{k,2-j}\binom{n}{j}z_k^{n-j}.\]
	Since $z_3=z_2^*$ and $(z_k^{n-j})^*=(z_k^*)^{n-j}$, we can write
	\[x[n]=c_{1,0}\delta[n]+2\mathfrak{Re}\left(\sum_{j=0}^{2}c_{2,2-j}\binom{n}{j}z_2^{n-j}\right).\]
	For the sake of brevity, we leave the rest of the computations to interested readers! Note that due to the parametric nature of this example and the presence of complex roots, computation of the coefficients and simplifying the results to get a neat real-valued function will be cumbersome and time-consuming.
\end{example}

\section{Supplementary Material}
\raggedright
To ensure the correctness of the proposed method and to facilitate a better comparison with the other two methods discussed in this paper, we have provided an implementation of all three methods, which can be found \href{https://github.com/mjvaez/inverse-Z-transform/blob/main/inverse_Z_transform.ipynb}{\textcolor{blue}{here}}.
\par

\section{Conclusions} \label{sec:conclusions}
In this paper, we presented a novel technique for computing the inverse $\mathcal{Z}$-transform of rational functions. Through a couple of examples, we demonstrated that compared to existing methods, our approach can result in fewer calculations in some cases. Unlike the method proposed by \cite{moreira-2012}, our approach eliminates the need for dividing by $z$ and instead utilizes the conventional partial fraction expansion over real numbers. These two characteristics make our method resemble the approaches used for computing well-known integral transforms.
Furthermore, compared to the work done by \cite{Juric-2023}, our method offers the advantage of not requiring additional algebraic manipulations to obtain a real-valued solution.

\appendix
\section{Proof of the Equation {\eqref{main_theorem}}} \label{app:appendixA}
In this section, we prove equation \eqref{main_theorem}. According to the lemma below, it can be easily seen that the first few terms of the time-domain sequence are zero.
\begin{lemma} \label{lemma}
	Let $x[n]$ be the inverse $\mathcal{Z}$-transform of
	$X(z)=\frac{1}{\left(z^2-2az+\left(a^2+b^2\right)\right)^k}$. Then for every $n$ such that $n<2k$, $x[n]=0$.\newline
\end{lemma}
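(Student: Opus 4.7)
The plan is to deduce the lemma directly from the long-division observation already recorded as equation \eqref{first terms} in the paper. That statement says: if $X(z)=N(z)/D(z)$ with $\deg N = p$ and $\deg D = q$ and $q>p$, then the first $q-p$ terms of the time-domain sequence vanish, i.e.\ $x[0]=x[1]=\cdots=x[q-p-1]=0$. So the only real work is identifying $p$ and $q$ for the $X(z)$ at hand and then applying that fact.

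First I would write $X(z)=N(z)/D(z)$ with $N(z)=1$ and $D(z)=\bigl(z^{2}-2az+(a^{2}+b^{2})\bigr)^{k}$. The numerator has degree $p=0$ and, since the quadratic factor has degree $2$, the denominator has degree $q=2k$. In particular $q>p$ for every $k\geq 1$, so the hypothesis of \eqref{first terms} is satisfied. Applying that equation gives $x[0]=x[1]=\cdots=x[q-p-1]=x[2k-1]=0$. Since we are working with the unilateral $\mathcal Z$-transform, we also have $x[n]=0$ for every $n<0$ by convention. Combining these two observations yields $x[n]=0$ for every $n<2k$, which is exactly the statement of the lemma.

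There is essentially no obstacle here: the lemma is a direct specialization of \eqref{first terms} and requires only a degree count. The one point worth a sentence of justification in the write-up is that the quadratic $z^{2}-2az+(a^{2}+b^{2})$ genuinely has degree $2$ (its leading coefficient is $1$, independent of $a,b$), so raising it to the $k$-th power does produce a denominator of degree exactly $2k$ rather than something smaller, ensuring that the gap $q-p$ really is $2k$.
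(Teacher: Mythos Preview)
Your proof is correct and matches the paper's own argument exactly: the paper simply states that the lemma is an immediate consequence of \eqref{first terms}, which is precisely the degree-count you carry out. (The paper also records a second, convolution-based proof in Appendix~\ref{app:appendixB}, but your approach coincides with the primary one.)
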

	\begin{proof}
		It's an immediate result of \eqref{first terms}. Nevertheless, an alternative proof is provided in Appendix \ref{app:appendixB}.
	\end{proof}
Now assume $n\geq2k$. For these $n$'s, we are going to find the inverse $\mathcal{Z}$-transform of $X(z)=\frac{1}{\left(z^2-2az+\left(a^2+b^2\right)\right)^k}$ using another method, that is the residue method. Note that 
\[x[n]=\mathcal{Z}^{-1}\left(\frac{1}{\left(z-\left(a+bi\right)\right)^k}\times\frac{1}{\left(z-\left(a-bi\right)\right)^k}\right).\]
According to \eqref{sum_of_res}, \begin{equation} \label{x[n]_sum}
    x[n]=\displaystyle\underset{z=a+ib}{\mathrm{Res}} X(z)+\displaystyle\underset{z=a-ib}{\mathrm{Res}} X(z).
\end{equation}
Given that both poles of $X(z)$ are of degree $k$, we apply the formula outlined in \eqref{calculating_residues} to obtain
\begin{equation} \label{first_res_eq}
	\displaystyle\underset{z=a+ib}{\mathrm{Res}} X(z) = \frac{1}{\left(k-1\right)!}\left.\left[\frac{d^{k-1}}{dz^{k-1}}\left(\frac{z^{n-1}}{\left(z-\left(a-ib\right)\right)^k}\right)\right]\right\rvert_{z=a+ib},
\end{equation}
and similarly, for the conjugate pole
\begin{equation} \label{second_res_eq}
	\displaystyle\underset{z=a-ib}{\mathrm{Res}} X(z) = \frac{1}{\left(k-1\right)!}\left.\left[\frac{d^{k-1}}{dz^{k-1}}\left(\frac{z^{n-1}}{\left(z-\left(a+ib\right)\right)^k}\right)\right]\right\rvert_{z=a-ib}.
\end{equation}
We proceed with computing a general expression for terms found in \cref{first_res_eq,second_res_eq}. Considering 
$\xi$ and $\eta$ as complex conjugates, we define
\begin{equation} \label{D}
	D=\left.\left[\frac{d^{k-1}}{dz^{k-1}}\left(\frac{z^{n-1}}{\left(z-\eta\right)^k}\right)\right]\right|_{z=\xi}.
\end{equation}
Applying the general Leibniz rule yields
\begin{equation}
	D=\sum_{t=0}^{k-1}{\left(\begin{matrix}k-1\\t\\\end{matrix}\right)\left.\left[\frac{d^t}{dz^t}\left(z^{n-1}\right)\right]\right|_{z=\xi}}\left.\left[\frac{d^{k-1-t}}{dz^{k-1-t}}\left(\left(z-\eta\right)^{-k}\right)\right]\right|_{z=\xi}.
\end{equation}
Note that
\begin{equation}
	\left.\left[\frac{d^t}{dz^t}\left(z^{n-1}\right)\right]\right|_{z=\xi}=u\left[n-1-t\right]\left(n-1\right)\ldots\left(n-t\right)\xi^{n-t-1}.
\end{equation}
Since we have supposed that $n\geq2k$, given the upper bound of the summation, $t\leq k-1$, we can infer that $n-t-1 \geq k \geq 1$. This leads us to conclude that $u[n-t-1]=1$. We can then express $D$ as:
\begin{equation} \label{eq:eq2}
	\begin{aligned}
		D &= \sum_{t=0}^{k-1} \binom{k-1}{t} (n-1) \ldots (n-t) \xi^{n-t-1} \\
		&\quad \times (-1)^{k-1-t} k(k+1) \ldots (2k-2-t) (\xi-\eta)^{-(2k-1-t)}.
	\end{aligned}
\end{equation}
We also observe that:
\[\binom{k-1}{t}(n-1)\ldots(n-t)\times k(k+1)\ldots(2k-2-t)\]
\vspace{-0.5cm}
\begin{align} \label{eq:eq3}
	&= \frac{(k-1)!}{t!(k-1-t)!}\times\frac{(n-1)!}{(n-t-1)!}\times\frac{(2k-2-t)!}{(k-1)!} \nonumber \\
	&= \frac{(n-1)!}{t!(n-t-1)!}\times\frac{(2k-2-t)!}{(k-1-t)!} \nonumber \\
	&= \binom{n-1}{t}\times\frac{(2k-2-t)!}{(k-1-t)!}.
\end{align}

Substituting equation \ref{eq:eq3} into \ref{eq:eq2}, we get
\[D=\xi^{n-1}\left(-1\right)^{k-1}\left(\xi-\eta\right)^{-\left(2k-1\right)}\sum_{t=0}^{k-1}{\left(\begin{matrix}n-1\\t\\\end{matrix}\right)\frac{\left(2k-2-t\right)!}{\left(k-1-t\right)!}\xi^{-t}\times\left(-1\right)^{-t}\left(\xi-\eta\right)^t}\]
\begin{equation}
	=\xi^{n-1}\left(-1\right)^{k-1}\left(\xi-\eta\right)^{1-2k}\sum_{t=0}^{k-1}{\left(\begin{matrix}n-1\\t\\\end{matrix}\right)\frac{\left(2k-2-t\right)!}{\left(k-1-t\right)!}}\left(\frac{\eta-\xi}{\xi}\right)^t.
\end{equation}
Given that $\xi$ and $\eta$ are complex conjugates, we can express them as $\xi=re^{i\theta}$ and $\eta=re^{-i\theta}$. After substituting these values, we obtain
\[D=\left.\left[\frac{d^{k-1}}{dz^{k-1}}\left(\frac{z^{n-1}}{\left(z-re^{-i\theta}\right)^k}\right)\right]\right|_{z=re^{i\theta}}\]
\[=r^{n-1}e^{i\left(n-1\right)\theta}\left(-1\right)^{k-1}\left(2ri\ \sin{\theta}\right)^{1-2k}\sum_{t=0}^{k-1}{\left(\begin{matrix}n-1\\t\\\end{matrix}\right)\frac{\left(2k-2-t\right)!}{\left(k-1-t\right)!}\left(e^{-2i\theta}-1\right)^t}\]
\begin{equation} \label{final_D}
	=r^{n-2k}e^{i\left(n-1\right)\theta}\left(-1\right)^{k-1}\left(2i\ \sin{\theta}\right)^{1-2k}\sum_{t=0}^{k-1}{\left(\begin{matrix}n-1\\t\\\end{matrix}\right)\frac{\left(2k-2-t\right)!}{\left(k-1-t\right)!}\left(e^{-2i\theta}-1\right)^t}.
\end{equation}
Likewise, \ref{second_res_eq} is given below, where $D^*$ represents the complex conjugate of $D$.
\[D^*=\left.\left[\frac{d^{k-1}}{dz^{k-1}}\left(\frac{z^{n-1}}{\left(z-re^{i\theta}\right)^k}\right)\right]\right|_{z=re^{-i\theta}}\]
\begin{equation} \label{Dstar}
	=r^{n-2k}e^{i\left(1-n\right)\theta}\left(-1\right)^{k-1}\left(-2i\ \sin{\theta}\right)^{1-2k}\sum_{t=0}^{k-1}{\left(\begin{matrix}n-1\\t\\\end{matrix}\right)\frac{\left(2k-2-t\right)!}{\left(k-1-t\right)!}\left(e^{2i\theta}-1\right)^t}.
\end{equation}
By utilizing the equations \cref{x[n]_sum,first_res_eq,second_res_eq,D,final_D,Dstar}, we can deduce that
\begin{equation}
	f\left[n\right]=\frac{\left(-1\right)^{k-1}}{(k-1)!}~r^{n-2k}\left(2i\ \sin{\theta}\right)^{1-2k}\left(A-A^*\right),
\end{equation}
where $A$ and $A^*$ are defined as
\begin{equation}
	A=e^{i\left(n-1\right)\theta}\sum_{t=0}^{k-1}{\left(\begin{matrix}n-1\\t\\\end{matrix}\right)\frac{\left(2k-2-t\right)!}{\left(k-1-t\right)!}\left(e^{-2i\theta}-1\right)^t}
\end{equation}
and
\begin{equation}
	A^*=e^{i\left(1-n\right)\theta}\sum_{t=0}^{k-1}{\left(\begin{matrix}n-1\\t\\\end{matrix}\right)\frac{\left(2k-2-t\right)!}{\left(k-1-t\right)!}\left(e^{2i\theta}-1\right)^t}.
\end{equation}
Since $A-A^*=2i\mathfrak{Im}(A)$, we can express $x[n]$ as
\begin{equation} \label{f_n_short}
	f\left[n\right]=\frac{\left(-1\right)^{k-1}}{(k-1)!}~r^{n-2k}\left(2i\ \sin{\theta}\right)^{1-2k}\left(2i\mathfrak{Im}(A)\right).
\end{equation}
Now, based on binomial expansion of $\left(e^{2i\theta}-1\right)^t$, $A$ can be expanded as
\[A=e^{i\left(n-1\right)\theta}\sum_{t=0}^{k-1}{\left(\begin{matrix}n-1\\t\\\end{matrix}\right)\frac{\left(2k-2-t\right)!}{\left(k-1-t\right)!}\sum_{j=0}^{t}{\left(\begin{matrix}t\\j\\\end{matrix}\right)\left(e^{-2i\theta}\right)^j\left(-1\right)^{t-j}}}\]
\begin{equation}
	=e^{i\left(n-1\right)\theta}\sum_{t=0}^{k-1}\sum_{j=0}^{t}{\left(\begin{matrix}n-1\\t\\\end{matrix}\right)\frac{\left(2k-2-t\right)!}{\left(k-1-t\right)!}\left(\begin{matrix}t\\j\\\end{matrix}\right)\left(e^{-2i\theta}\right)^j\left(-1\right)^{t-j}}.
\end{equation}
By interchanging the order of summations, we can express $A$ as
\begin{equation}
	A=e^{i\left(n-1\right)\theta}\sum_{j=0}^{k-1}\sum_{t=j}^{k-1}{\left(\begin{matrix}n-1\\t\\\end{matrix}\right)\frac{\left(2k-2-t\right)!}{\left(k-1-t\right)!}\left(\begin{matrix}t\\j\\\end{matrix}\right)\left(e^{-2i\theta}\right)^j\left(-1\right)^{t-j}}.
\end{equation}
This leads us to
\[A=e^{i\left(n-1\right)\theta}\sum_{j=0}^{k-1}{\left(e^{-2i\theta}\right)^j\sum_{t=j}^{k-1}{\left(\begin{matrix}n-1\\t\\\end{matrix}\right)\frac{\left(2k-2-t\right)!}{\left(k-1-t\right)!}\left(\begin{matrix}t\\j\\\end{matrix}\right)\left(-1\right)^{t-j}}}\]
\begin{equation}
	=\sum_{j=0}^{k-1}{e^{\left(n-2j-1\right)i\theta}\sum_{t=j}^{k-1}{\left(\begin{matrix}n-1\\t\\\end{matrix}\right)\frac{\left(2k-2-t\right)!}{\left(k-1-t\right)!}\left(\begin{matrix}t\\j\\\end{matrix}\right)\left(-1\right)^{t-j}}}
\end{equation}
\begin{equation}
	\Longrightarrow \mathfrak{Im}(A)=\sum_{j=0}^{k-1}{\sin{(\left(n-2j-1\right)\theta)}\sum_{t=j}^{k-1}{\left(\begin{matrix}n-1\\t\\\end{matrix}\right)\frac{\left(2k-2-t\right)!}{\left(k-1-t\right)!}\left(\begin{matrix}t\\j\\\end{matrix}\right)\left(-1\right)^{t-j}}}.
\end{equation}
Using equation \eqref{f_n_short}, we conclude that
\begin{align}
	x[n] &= \frac{(-1)^{k-1}}{(k-1)!}~r^{n-2k} (2i \sin{\theta})^{1-2k} \times \\
	&\qquad \left(2i \sum_{j=0}^{k-1} \sin{((n-2j-1)\theta)} \sum_{t=j}^{k-1} \left(\binom{n-1}{t} \frac{(2k-2-t)!}{(k-1-t)!} \binom{t}{j} (-1)^{t-j}\right)\right) \nonumber \\
	&= \frac{r^{n-2k}}{(k-1)!(\sin{\theta})^{2k-1}2^{2k-2}} \times \\
	&\qquad \sum_{j=0}^{k-1} (-1)^{j} \sin{((n-2j-1)\theta)} \sum_{t=j}^{k-1} \left(\binom{n-1}{t} \frac{(2k-2-t)!}{(k-1-t)!} \binom{t}{j} (-1)^{t}\right).
\end{align}
Note that we had assumed that $n\geq2k$. To combine this with the fact that $x[n]=0$ for $n<2k$ (obtained from lemma \ref{lemma}), we can write
\begin{align} \label{intermediate_f}
	x[n] &= \frac{r^{n-2k}u[n-2k]}{(k-1)!(\sin{\theta})^{2k-1}2^{2k-2}} \nonumber \\
	&\quad\times \sum_{j=0}^{k-1} (-1)^{j} \sin{((n-2j-1)\theta)} \sum_{t=j}^{k-1} \left(\binom{n-1}{t} \frac{(2k-2-t)!}{(k-1-t)!} \binom{t}{j} (-1)^{t}\right).
\end{align}
To simplify the equation \eqref{intermediate_f} more, we state the following lemma
\begin{lemma}
	\begin{equation} \nonumber
		\sum_{t=j}^{k-1} \binom{n-1}{t} \frac{(2k-2-t)!}{(k-1-t)!} \binom{t}{j} (-1)^{t}
	\end{equation}
	\vspace{-0.25cm}
	\begin{equation} \label{internal_summation}
		\begin{aligned}
			&= c_{j,k} (n-1) \ldots (n-j) \times (n-(k+1+j)) \ldots (n-(2k-1)) \\
			&= c_{j,k} (n-1)_j (n-(k+1+j))_{k-1-j}~,
		\end{aligned}
	\end{equation}
	where $c_{j,k}$ is a fixed number (with respect to $n$) and we will find it in the next theorem. Furthermore, the notation $\left(x\right)_n$ is the falling factorial defined in equation \eqref{falling_factorial}.
\end{lemma}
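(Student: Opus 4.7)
The plan is to treat both sides as polynomials in $n$ and identify them up to a multiplicative constant by matching degree and zeros. The LHS is a linear combination of the binomials $\binom{n-1}{t}$ for $t=j,\ldots,k-1$, each a polynomial in $n$ of degree $t\le k-1$, so the LHS has degree at most $k-1$. The RHS $(n-1)_j(n-(k+1+j))_{k-1-j}$ is a polynomial of degree exactly $k-1$ with simple zeros at the $k-1$ distinct integers $\{1,2,\ldots,j\}\cup\{k+1+j,\ldots,2k-1\}$. Once I exhibit those same $k-1$ values as zeros of the LHS, the two sides must agree up to an $n$-independent constant $c_{j,k}$, which the following theorem will pin down.

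The zeros $n=1,\ldots,j$ are immediate: for any $\nu\in\{1,\ldots,j\}$ and any $t\ge j\ge\nu$, the factor $\binom{\nu-1}{t}$ vanishes by the convention in Remark~\ref{remark_1}.

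The main obstacle is the second batch, namely vanishing at $n=\nu$ for $k+1+j\le\nu\le 2k-1$. Here my plan is to apply the standard identity $\binom{n-1}{t}\binom{t}{j}=\binom{n-1}{j}\binom{n-1-j}{t-j}$ to pull $\binom{n-1}{j}$ out of the sum, substitute $s=t-j$ and set $m:=k-1-j$, and rewrite $(2k-2-t)!/(k-1-t)! = (k-1)!\,\binom{k-1+m-s}{k-1}$. This reduces vanishing at $n=\nu$ to showing
\[ S(\nu):=\sum_{s=0}^{m}(-1)^s\binom{\nu-1-j}{s}\binom{k-1+m-s}{k-1}=0. \]
Setting $\mu:=\nu-1-j$, so that $k\le\mu\le k-1+m$ in the range of interest, the key observation is that for $s$ in the extra range $m+1\le s\le\mu$ the top $k-1+m-s$ lies in $[0,k-2]$, whence $\binom{k-1+m-s}{k-1}=0$; therefore $S(\nu)$ is unchanged if the summation is extended up to $s=\mu$. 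I then invoke the standard alternating identity
\[ \sum_{s=0}^{\mu}(-1)^s\binom{\mu}{s}\binom{N-s}{M}=\binom{N-\mu}{M-\mu}, \]
obtained by reading the coefficient of $y^M$ in $(1+y)^N(y/(1+y))^{\mu}=y^{\mu}(1+y)^{N-\mu}$, with $N=k-1+m$ and $M=k-1$. Since $\mu\ge k$ forces $M-\mu<0$, the right side vanishes, so $S(\nu)=0$ as needed.

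Combining the two batches yields $k-1$ distinct zeros of the degree-$\le k-1$ polynomial on the left, so it must factor as $c_{j,k}\,(n-1)_j(n-(k+1+j))_{k-1-j}$ for some $n$-independent constant $c_{j,k}$, which is precisely the claim of the lemma. The explicit value of $c_{j,k}$ can then be read off by comparing leading coefficients in $n$ (or by substituting a convenient integer value of $n$), which is what the next theorem will do.
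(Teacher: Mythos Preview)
Your proof is correct and shares the paper's overall architecture: view both sides as polynomials in $n$ of degree at most $k-1$, exhibit the $k-1$ zeros $\{1,\ldots,j\}\cup\{k+1+j,\ldots,2k-1\}$ of the LHS, and conclude proportionality. The treatment of the first batch of zeros is essentially the same.

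Where you differ is in the second batch. The paper substitutes $n=k+\rho$, shifts $w=t-j$, and then expands the factorial ratio $\frac{(2\sigma+j-w)!}{(\sigma+\rho-w)!}$ as a polynomial of degree at most $\sigma-1$ in $(\sigma-w)$ via Vieta's formulas, finally invoking the surjection identity $\sum_{w=0}^{\sigma}(-1)^w\binom{\sigma}{w}(\sigma-w)^p=0$ for $0\le p\le\sigma-1$. You instead apply the trinomial revision $\binom{n-1}{t}\binom{t}{j}=\binom{n-1}{j}\binom{n-1-j}{t-j}$ to peel off $\binom{n-1}{j}$, rewrite the factorial ratio as $(k-1)!\binom{k-1+m-s}{k-1}$, extend the range of summation harmlessly, and close with the generating-function identity $\sum_{s=0}^{\mu}(-1)^s\binom{\mu}{s}\binom{N-s}{M}=\binom{N-\mu}{M-\mu}$. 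Your route is a bit slicker: it stays entirely within binomial-coefficient identities and avoids the detour through Vieta's formulas and the power-sum expansion. The paper's route, on the other hand, lands on a formula with a pleasant combinatorial interpretation (counting surjections), which some readers may find illuminating. Either way, the leading-coefficient comparison you mention is exactly how the paper's subsequent corollary extracts $c_{j,k}=(-1)^{k-1}\binom{k-1}{j}$.
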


\begin{proof}
	First of all, we know that the result of the series is a $(k-1)$ degree polynomial in terms of $n$. Therefore, it suffices to find $k-1$ roots for it. We also note that
	\begin{equation}
		\left(\begin{matrix}n-1\\t\\\end{matrix}\right)=\frac{\left(n-1\right)\ldots(n-t)}{t!}.
	\end{equation}
	Thus, for each $t\geq j$, we have:
	\begin{equation}
		\left(\begin{matrix}n-1\\t\\\end{matrix}\right)=\left(n-1\right)\ldots\left(n-j\right)f\left(n\right),
	\end{equation}
	where $f(n)$ is a polynomial in terms of $n$. Therefore
	\begin{equation}
		\begin{aligned}
			\sum_{t=j}^{k-1} \binom{n-1}{t} \frac{(2k-2-t)!}{(k-1-t)!} \binom{t}{j} (-1)^t &= (n-1) \ldots (n-j) \sum_{t=j}^{k-1} f_t(n) \\
			&= (n-1) \ldots (n-j) g(n).
		\end{aligned}
	\end{equation}
	where $f_t$ and $g$ are also polynomials of $n$.\newline \newline
	We aim to show that $k+1+j, k+2+j, \ldots, 2k-1$ are some roots of
	\begin{equation}
		\Phi\left(n\right)=\sum_{t=j}^{k-1}{\left(\begin{matrix}n-1\\t\\\end{matrix}\right)\frac{\left(2k-2-t\right)!}{\left(k-1-t\right)!}\left(\begin{matrix}t\\j\\\end{matrix}\right)\left(-1\right)^t};
	\end{equation}
	i.e., if $n=k+\rho$ where $1+j\le\rho\le k-1$, then $\Phi(n)=0$.
	
    By expanding $\Phi(k+\rho)$ and performing algebraic manipulations, we obtain
	\begin{equation}
		\begin{aligned}
			\Phi(k+\rho) &= \sum_{t=j}^{k-1} \binom{k+\rho-1}{t} \frac{(2k-2-t)!}{(k-1-t)!} \binom{t}{j} (-1)^t \\
			&= \sum_{t=j}^{k-1} \frac{(k+\rho-1)!}{t!(k+\rho-1-t)!} \frac{(2k-2-t)!}{(k-1-t)!} \frac{t!}{j!(t-j)!} (-1)^t \\
			&= \frac{(k+\rho-1)!}{j!} \sum_{t=j}^{k-1} \frac{1}{(k+\rho-1-t)!} \frac{(2k-2-t)!}{(k-1-t)!} \frac{1}{(t-j)!} (-1)^t \\
			&= \frac{(k+\rho-1)!}{j!(k-1-j)!} \sum_{t=j}^{k-1} \frac{(2k-2-t)!}{(k+\rho-1-t)!} \frac{(k-1-j)!}{(k-1-t)!(t-j)!} (-1)^t \\
			&= \frac{(k+\rho-1)!}{j!(k-1-j)!} \sum_{t=j}^{k-1} \frac{(2k-2-t)!}{(k+\rho-1-t)!} \binom{k-1-j}{t-j} (-1)^t.
		\end{aligned}
	\end{equation}
	By letting $w=t-j$, we will have
	\begin{equation}
	\Phi\left(k+\rho\right)=\frac{\left(-1\right)^j\left(k+\rho-1\right)!}{j!\left(k-1-j\right)!}\sum_{w=0}^{k-1-j}{\frac{\left(2k-2-j-w\right)!}{\left(k+\rho-1-j-w\right)!}\left(\begin{matrix}k-1-j\\w\\\end{matrix}\right)\left(-1\right)^w}.
	\end{equation}
	Moreover, $\sigma=k-1-j$ implies that
	\begin{equation}
		\Phi\left(k+\rho\right)=\frac{\left(-1\right)^j\left(k+\rho-1\right)!}{j!\left(k-1-j\right)!}\sum_{w=0}^{\sigma}{\frac{\left(2\sigma+j-w\right)!}{\left(\sigma+\rho-w\right)!}\left(\begin{matrix}\sigma\\w\\\end{matrix}\right)\left(-1\right)^w}.
	\end{equation}
	Hence, it suffices to prove that
	\begin{equation}
		\sum_{w=0}^{\sigma}{\frac{\left(2\sigma+j-w\right)!}{\left(\sigma+\rho-w\right)!}\left(\begin{matrix}\sigma\\w\\\end{matrix}\right)\left(-1\right)^w}=0,
	\end{equation}
	where $1+j\le\rho\le\sigma+j$. Note that
	\begin{equation} \label{main_eq_product}
		\frac{\left(2\sigma+j-w\right)!}{\left(\sigma+\rho-w\right)!}=\left(2\sigma+j-w\right)\left(2\sigma+j-w-1\right)\ldots\left(2\sigma+j-w-\nu\right),
	\end{equation}
	where $\nu=\sigma+j-\rho-1$. Therefore, $-1\le\nu\le\sigma-2$. We can rewrite \eqref{main_eq_product} like this:
	\begin{equation}
		\frac{\left(2\sigma+j-w\right)!}{\left(\sigma+\rho-w\right)!}=\left(\sigma-w+\left(\sigma+j\right)\right)\left(\sigma-w+\left(\sigma+j-1\right)\right)\ldots\left(\sigma-w+\left(\sigma+j-\nu\right)\right),
	\end{equation}
	which is equal to
	\[\sum_{p=0}^{\nu+1}{a_p{(\sigma-w)}^p},\]
	where $a_p$'s are some coefficients independent of $w$ and they can be calculated by Vieta's formulas. Consequently, it suffices to show that for each $p$ such that $0\le p\le\sigma-1$,
	\begin{equation}
		\sum_{w=0}^{\sigma}{\sum_{p=0}^{\nu+1}{a_p{(\sigma-w)}^p}\left(\begin{matrix}\sigma\\w\\\end{matrix}\right)\left(-1\right)^w}=0.
	\end{equation}
	By changing the order of summation, and considering that $a_p$'s are independent of $w$, it's equivalent to prove
	\begin{equation}
		\sum_{p=0}^{\nu+1}{a_p\sum_{w=0}^{\sigma}{(\sigma-w)}^p\left(\begin{matrix}\sigma\\w\\\end{matrix}\right)\left(-1\right)^w}=0,
	\end{equation}
	which is true, since
	\[\sum_{w=0}^{\sigma}{{(\sigma-w)}^p\left(\begin{matrix}\sigma\\w\\\end{matrix}\right)\left(-1\right)^w}\]
	represents the number of ways of putting $p$ distinct balls into $\sigma$ distinct boxes such that none of the boxes is empty \cite[Theorem 1.1]{duran-1974}. Given that $0\le p\le\sigma-1$,
	\begin{equation}
		\sum_{w=0}^{\sigma}{{(\sigma-w)}^p\left(\begin{matrix}\sigma\\w\\\end{matrix}\right)\left(-1\right)^w}=0.
	\end{equation}
\end{proof}

\begin{corollary}
	\begin{equation} \label{C}
		c_{j,k}=\left(-1\right)^{k-1}\left(\begin{matrix}k-1\\j\\\end{matrix}\right).
	\end{equation}
\end{corollary}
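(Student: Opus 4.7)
The plan is to read off $c_{j,k}$ by comparing leading coefficients in $n$ on the two sides of the identity established in the preceding lemma,
\begin{equation*}
\sum_{t=j}^{k-1} \binom{n-1}{t} \frac{(2k-2-t)!}{(k-1-t)!} \binom{t}{j} (-1)^{t} = c_{j,k}\,(n-1)_j\,(n-(k+1+j))_{k-1-j}.
\end{equation*}
Both sides are polynomials in $n$ of degree $k-1$: on the left, the summand with index $t$ has degree $t$ in $n$ (via $\binom{n-1}{t}$), so the maximum degree $k-1$ is achieved only at $t = k-1$; on the right, the two falling factorials are monic polynomials in $n$ of degrees $j$ and $k-1-j$, so the leading coefficient is exactly $c_{j,k}$.

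First, I would isolate the $t = k-1$ summand on the left-hand side. Its contribution is
\begin{equation*}
\binom{n-1}{k-1}\,\frac{(k-1)!}{0!}\,\binom{k-1}{j}\,(-1)^{k-1},
\end{equation*}
and since $\binom{n-1}{k-1}$ has leading coefficient $\frac{1}{(k-1)!}$ in $n$, the $(k-1)!$ and $\frac{1}{(k-1)!}$ cancel, leaving $(-1)^{k-1}\binom{k-1}{j}$ as the coefficient of $n^{k-1}$ on the left.

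Matching this against the leading coefficient $c_{j,k}$ on the right yields
\begin{equation*}
c_{j,k} = (-1)^{k-1}\binom{k-1}{j},
\end{equation*}
which is \eqref{C}. There is essentially no obstacle here: the only thing to verify with care is that no other term $t < k-1$ on the left contributes to $n^{k-1}$ (which is immediate from the degree of $\binom{n-1}{t}$) and that the two falling factorials on the right are genuinely monic of the claimed degrees (true because $(n-1)_j = (n-1)(n-2)\cdots(n-j)$ and $(n-(k+1+j))_{k-1-j} = (n-(k+1+j))\cdots(n-(2k-1))$, each factor monic in $n$).
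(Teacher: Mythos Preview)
Your proof is correct and follows exactly the same approach as the paper: both compare the leading coefficient in $n$ on each side of the lemma's identity, noting that only the $t=k-1$ summand contributes on the left while the right-hand side is monic up to the factor $c_{j,k}$. Your write-up is in fact slightly more explicit about why lower-$t$ terms do not contribute and why the falling factorials are monic.
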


\begin{proof}
	To prove this, we note that both sides of \eqref{internal_summation} are polynomials of $n$, so their leading coefficients must match. The leading coefficient of the LHS of \eqref{internal_summation}, i.e., the coefficient of $n^{k-1}$, is
	\begin{equation}
		\frac{1}{\left(k-1\right)!}\frac{\left(2k-2-\left(k-1\right)\right)!}{\left(k-1-\left(k-1\right)\right)!}\left(\begin{matrix}k-1\\j\\\end{matrix}\right)\left(-1\right)^{k-1}=\left(-1\right)^{k-1}\left(\begin{matrix}k-1\\j\\\end{matrix}\right).
	\end{equation}
	Hence, The leading coefficient of the RHS of \eqref{internal_summation}, which is $c_{j,k}$, is equal to
	\[\left(-1\right)^{k-1}\left(\begin{matrix}k-1\\j\\\end{matrix}\right).\]
\end{proof}

\begin{corollary} using \cref{intermediate_f,internal_summation,C}, we conclude that
	\[\mathcal{Z}^{-1}\left(\frac{1}{\left(z^2-2az+\left(a^2+b^2\right)\right)^k}\right)\]
	\vspace{-0.5cm}
	\begin{align}  \label{corollary2}
		&=\frac{r^{n-2k}u[n-2k]}{(k-1)!\left(\sin{\theta}\right)^{2k-1}2^{2k-2}} \nonumber \\
		&\quad\times\sum_{j=0}^{k-1}{\left(-1\right)^{j}\sin{\left(\left(n-2j-1\right)\theta\right)}\sum_{t=j}^{k-1}{\left(\begin{matrix}n-1\\t\\\end{matrix}\right)\frac{\left(2k-2-t\right)!}{\left(k-1-t\right)!}\left(\begin{matrix}t\\j\\\end{matrix}\right)\left(-1\right)^{t}}} \nonumber \\
		&=\left(\frac{-1}{4}\right)^{k-1}\frac{r^{n-2k}u\left[n-2k\right]}{(k-1)!\left(\sin{\theta}\right)^{2k-1}} \nonumber \\
		&\quad\times\sum_{j=0}^{k-1}{\left(-1\right)^j\left(\begin{matrix}k-1\\j\\\end{matrix}\right)\left(n-1\right)_j\left(n-\left(k+1+j\right)\right)_{k-1-j}\sin{\left(\left(n-2j-1\right)\theta\right)}}.
	\end{align}
 Note that
 \begin{align}
     &\frac{1}{(k-1)!}\left(\begin{matrix}k-1\\j\\\end{matrix}\right)\left(n-1\right)_j\left(n-\left(k+1+j\right)\right)_{k-1-j}\nonumber\\
     &= \frac{1}{(k-1)!}\times\frac{(k-1)!}{j!(k-1-j)!}\left(j!\binom{n-1}{j}\right)\left((k-1-j)!\binom{n-(k+1+j)}{k-1-j}\right)\nonumber\\
     &=\binom{n-1}{j}\binom{n-(k+1+j)}{k-1-j}
 \end{align}
 Consequently, we can express \eqref{corollary2} as
 \begin{align}
     &\left(\frac{-1}{4}\right)^{k-1}\frac{r^{n-2k}u\left[n-2k\right]}{\left(\sin{\theta}\right)^{2k-1}} \nonumber \\
		&\quad\times\sum_{j=0}^{k-1}{\left(-1\right)^j\left(\begin{matrix}n-1\\j\\\end{matrix}\right)\binom{n-\left(k+1+j\right)}{k-1-j}\sin{\left(\left(n-2j-1\right)\theta\right)}} \nonumber \\
    &=\frac{2~(-1)^{k-1}r^{n-2k}}{\left(2\sin{\theta}\right)^{2k-1}}\sum_{j=0}^{k-1}{\left(-1\right)^j\binom{n-1}{j}\binom{n-(k+1+j)}{k-1-j}\sin{\left(\left(n-2j-1\right)\theta\right)}}.
 \end{align}
 The term $u\left[n-2k\right]$ was removed according to the Remark \eqref{remark_1}.
\end{corollary}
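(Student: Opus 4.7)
The plan is to combine the two preparatory results from Appendix \ref{app:appendixA} — the inner-sum identity of the Lemma (equation \eqref{internal_summation}) and the formula for $c_{j,k}$ of the preceding Corollary (equation \eqref{C}) — with the intermediate expression \eqref{intermediate_f}, and then to tidy the prefactors so that the clean form appearing in \eqref{main_theorem} drops out.

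First, for each fixed $j$ I would replace the inner sum over $t$ appearing in \eqref{intermediate_f} by its closed form
$$c_{j,k}\,(n-1)_j\,(n-(k+1+j))_{k-1-j}$$
as supplied by \eqref{internal_summation}. Substituting $c_{j,k}=(-1)^{k-1}\binom{k-1}{j}$ from \eqref{C}, I would pull the global sign $(-1)^{k-1}$ outside the outer sum over $j$, and collect it with the constant $1/((k-1)!\,2^{2k-2})$ that already sits in the prefactor of \eqref{intermediate_f}.

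Second, I would convert the two falling factorials back into binomial coefficients via the elementary identity $(m)_\ell=\ell!\binom{m}{\ell}$, producing $j!\binom{n-1}{j}$ and $(k-1-j)!\binom{n-(k+1+j)}{k-1-j}$. The combinatorial combination $\frac{1}{(k-1)!}\binom{k-1}{j}\,j!\,(k-1-j)!$ then telescopes to $1$, so the summand reduces cleanly to $(-1)^j\binom{n-1}{j}\binom{n-(k+1+j)}{k-1-j}\sin((n-2j-1)\theta)$. Separately, the numerical prefactor $1/2^{2k-2}$ combined with $1/(\sin\theta)^{2k-1}$ merges into $2/(2\sin\theta)^{2k-1}$, matching the exact constant required in \eqref{main_theorem}.

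Finally, I would dispose of the indicator $u[n-2k]$ carried along in \eqref{intermediate_f}: by Remark \ref{remark_1}, the binomial coefficient $\binom{n-(k+1+j)}{k-1-j}$ vanishes whenever $n-(k+1+j)<k-1-j$, i.e.\ whenever $n<2k$, so the step function is enforced termwise by the summand itself and may be silently dropped. The principal difficulty in this argument is pure algebraic bookkeeping — shepherding the three sign sources $(-1)^{k-1}$, $(-1)^j$, $(-1)^t$, the $j!$, $(k-1-j)!$ and $(k-1)!$ factorials, the two binomial coefficients, and the powers of $2$ through the simplification without an off-by-one slip; once \eqref{internal_summation} and \eqref{C} are in hand, no further ideas enter.
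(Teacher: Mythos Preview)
Your proposal is correct and follows essentially the same route as the paper: substitute \eqref{internal_summation} and \eqref{C} into \eqref{intermediate_f}, rewrite the falling factorials as binomials via $(m)_\ell=\ell!\binom{m}{\ell}$ so that the factor $\frac{1}{(k-1)!}\binom{k-1}{j}j!(k-1-j)!$ collapses to $1$, repackage the constant as $2/(2\sin\theta)^{2k-1}$, and drop $u[n-2k]$ by Remark~\ref{remark_1}. There is no additional idea in the paper beyond this bookkeeping.
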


\newpage
\section{Another Proof for the Lemma \ref{lemma}} \label{app:appendixB}
\begin{proof} We utilize the property that $\mathcal{Z}(x[n]*y[n])=X(z)Y(z)$, where $*$ denotes the convolution operator.
	\begin{equation}
		\begin{aligned}
			x[n] &= \mathcal{Z}^{-1}\left(\frac{1}{\left(z^2-2az+\left(a^2+b^2\right)\right)^k}\right) \\
			&= \mathcal{Z}^{-1}\left(\frac{1}{\left(z-(a+bi)\right)^k}\times\frac{1}{\left(z-(a-bi)\right)^k}\right) \\
			&= \binom{n-1}{n-k}(a+ib)^{n-k}u[n-k]\ast\binom{n-1}{n-k}(a-ib)^{n-k}u[n-k] \\
			&= \sum_{m=0}^{\infty}\binom{m-1}{m-k}(a+ib)^{m-k}u[m-k]\binom{n-m-1}{n-m-k}(a-ib)^{n-m-k}u[n-m-k] \\
			&= \sum_{m=k}^{n-k}\binom{m-1}{m-k}(a+ib)^{m-k}\binom{n-m-1}{n-m-k}(a-ib)^{n-m-k}.
		\end{aligned}
	\end{equation}
	Therefore, if $n<2k$, then $n-k<k$, so $x[n]$ will be $0$.
\end{proof}



\end{document}